\newcommand{\xequal}[2][]{\ext@arrow 0055{\equalfill@}{#1}{#2}}
\def\equalfill@{\arrowfill@\Relbar\Relbar\Relbar}
\newcommand{\maru}[1]{\raise0.2ex\hbox{\textcircled{{\rm Stab}(G)riptsize{#1}}}}
 \def\NZQ{\mathbb}               % the font for N,Z,Q,R,C
 \def\RR{{\NZQ R}}
 \def\Jc{{\mathcal J}}
 \def\Gc{{\mathcal G}}
 \def\Fc{{\mathcal F}}
 \def\Cc{{\mathcal C}}
 \def\Pc{{\mathcal P}}
 \def\Mc{{\mathcal M}}
 \def\Hc{{\mathcal H}}
 \def\Gc{{\mathcal G}}
 \def\Oc{{\mathcal O}}
 \def\eb{{\mathbf e}}
 \def\opn#1#2{\def#1{\operatorname{#2}}} % to make operators
 \opn\chara{char} \opn\length{\ell} \opn\pd{pd} \opn\rk{rk}
 \opn\projdim{proj\,dim} \opn\injdim{inj\,dim} \opn\rank{rank}
 \opn\depth{depth} \opn\grade{grade} \opn\height{height}
 \opn\embdim{emb\,dim} \opn\codim{codim}
 \opn\Tr{Tr} \opn\bigrank{big\,rank}
 \opn\superheight{superheight}\opn\lcm{lcm}
 \opn\trdeg{tr\,deg}%\emph{
 \opn\reg{reg} \opn\lreg{lreg} \opn\ini{in} \opn\lpd{lpd}
 \opn\size{size} \opn\sdepth{sdepth}
 \opn\link{link}\opn\fdepth{fdepth}\opn\lex{lex}
 \opn\tr{tr}
 \opn\type{type}
 \opn\gap{gap}
 \opn\arithdeg{arith-deg}
 \opn\revlex{revlex}
 \opn\cut{cut}
 \opn\div{div} \opn\Div{Div} \opn\cl{cl} \opn\Cl{Cl}
 \opn\Spec{Spec} \opn\Supp{Supp} \opn\supp{supp} \opn\Sing{Sing}
 \opn\Ass{Ass} \opn\Min{Min}\opn\Mon{Mon}
 \opn\Ann{Ann} \opn\Rad{Rad} \opn\Soc{Soc}
 \opn\Im{Im} \opn\Ker{Ker} \opn\Coker{Coker} \opn\Am{Am}
 \opn\Hom{Hom} \opn\Tor{Tor} \opn\Ext{Ext} \opn\End{End}
 \opn\Aut{Aut} \opn\id{id}
 \opn\nat{nat}
 \opn\pff{pf}%   \pf exists already
 \opn\Pf{Pf} \opn\GL{GL} \opn\SL{SL} \opn\mod{mod} \opn\ord{ord}
 \opn\Gin{Gin} \opn\Hilb{Hilb}\opn\sort{sort}
 \opn\PF{PF}\opn\Ap{Ap}
 \opn\mult{mult}
 \opn\bight{bight}
 \opn\aff{aff}
 \opn\relint{relint} \opn\st{st}
 \opn\lk{lk} \opn\cn{cn} \opn\core{core} \opn\vol{vol}  \opn\inp{inp} \opn\nilpot{nilpot}
 \opn\link{link} \opn\star{star}\opn\lex{lex}\opn\set{set}
 \opn\width{wd}
 \opn\Fr{F}
 \opn\QF{QF}
 \opn\G{G}
 \opn\type{type}\opn\res{res}
 \opn\conv{conv}
 \opn\sk{sk}
 \opn\conv{conv}
 \opn\Deg{Deg}
 \opn\Sym{Sym}
 \opn\gr{gr}
 \def\pot#1#2{#1[\kern-0.28ex[#2]\kern-0.28ex]}
 \opn\dirlim{\underrightarrow{\lim}}
 \opn\inivlim{\underleftarrow{\lim}}
 \def\Implies{\ifmmode\Longrightarrow \else
         \unskip${}\Longrightarrow{}$\ignorespaces\fi}
 \def\implies{\ifmmode\Rightarrow \else
         \unskip${}\Rightarrow{}$\ignorespaces\fi}
 \def\iff{\ifmmode\Longleftrightarrow \else
         \unskip${}\Longleftrightarrow{}$\ignorespaces\fi}
 \newtheorem{Theorem}{Theorem}[section]
 \newtheorem{Lemma}[Theorem]{Lemma}
 \newtheorem{Proposition}[Theorem]{Proposition}
 \let\epsilon\varepsilon
 \let\kappa=\varkappa
 \opn\dis{dis}
 \def\pnt{{\raise0.5mm\hbox{\large\bf.}}}
 \opn\Lex{Lex}
\begin{document}

\title{Simplex faces of order and chain polytopes}
\author {Aki Mori}

\address{Aki Mori, 
Learning center, Institute for general education, 
Setsunan University, Neyagawa, Osaka, 572-8508, Japan}
\email{aki.mori@setsunan.ac.jp}
\dedicatory{ }
\keywords{order polytope, chain polytope, partially ordered set, skeleton}
\subjclass[2020]{Primary 52B11}

\begin{abstract}
It will be proved that a $k$-clique in the $1$-skeleton of either the order polytope or the chain polytope corresponds to the $(k-1)$-face, which is a simplex, in each polytope. These results generalize the known explicit descriptions of edges and triangular $2$-faces of each polytope. 
\end{abstract}
\maketitle

\section{Introduction}
The order polytope $\Oc(P)$ and chain polytope $\Cc(P)$ of a finite poset (partially ordered set) $P$ were introduced by Stanley \cite{Sta}.
These polytopes represent significant classes of $n$-dimensional convex polytopes and have been extensively studied in combinatorics and commutative algebra.
In \cite{HL2}, a characterization of posets $P$ for which $\mathcal{O}(P)$ and $\mathcal{C}(P)$ are unimodularly equivalent was given.
Our focus is on the face structure of each polytope.
In each of $\Oc(P)$ and $\Cc(P)$, the explicit descriptions of vertices and facets were obtained by \cite{Sta}, edges by \cite{HL}, and triangular $2$-faces by \cite{M}, respectively.
In \cite[Corollary 1]{M}, it is proved that a triangle in the $1$-skeleton of either $\Oc(P)$ or $\Cc(P)$ corresponds to the triangular $2$-face of each polytope.
In the present paper, we aim to extend this statement to any dimensions.
Namely, we will prove that a $k$-clique in the 1-skeleton of $\Oc(P)$ or $\Cc(P)$ corresponds to the $(k-1)$-simplex face of each polytope.
Note that if a face of a convex polytope is a simplex, then we call it a {\em simplex face} throughout this paper.

We introduce the definition and notation that will be used in the subsequent sections. 
Let $\Pc$ be a convex polytope and $V(\Pc)$ be the set of its vertices. 
The {\em $1$-skeleton} of $\Pc$ is the finite simple graph $\sk(\Pc)$ on $V(\Pc)$ whose edges are $\{v, w\}$, where $v, w \in V(\Pc)$ and $\conv(\{v, w\})$ is an edge of $\Pc$. 
A {\em clique} of $\sk(\Pc)$ is a complete subgraph of $\sk(\Pc)$. 
In particular, a clique consisting of $k$ vertices is called a {\em k-clique}. 
A polytope $\Pc$ is {\it k-neighborly} if every subset of at most $k$ of its vertices defines a face of $\Pc$. 
Thus, a subset $C \subset V(\Pc)$ is a clique of $\sk(\Pc)$ if and only if the convex hull $\conv(C) \subset \Pc$ is a $2$-neighborly subpolytope of $\Pc$. 
In general, $\conv(C)$ may not be a face of $\Pc$. 
In the subsequent sections, we will prove that when $\Pc$ is either $\Oc(P)$ or $\Cc(P)$, $\conv(C)$ is a face of $\Pc$ and, furthermore, that $\conv(C)$ is a simplex.

\section{Order polytopes and their simplex faces}
Let $P = \{p_1, \ldots, p_d\}$ be a finite partially ordered set.  
A {\em poset ideal} of $P$ is a subset $I \subset P$ such that if $p_i \in I$ and $p_j \leq p_i$, then $p_j \in I$.  
In particular, the empty set $\emptyset$ as well as $P$ itself is a poset ideal of $P$.  
Let $\Jc(P)$ denote the set of poset ideals of $P$. 
Let $\eb_1, \ldots, \eb_d$ denote the canonical unit coordinate vectors of $\RR^d$.  
For each $I \in \Jc(P)$, one introduces $\rho(I) = \sum_{p_i \in I} \eb_i \in \RR^d$.  
Thus, $\rho(\emptyset)$ is the origin of $\RR^d$.  
The {\em order polytope} \cite{Sta} of $P$ is the convex hull $\Oc(P) = \conv(\{\rho(I) : I \in \Jc(P)\})$ in $\RR^d$.  
It is known that $\dim(\Oc(P)) = d$, and according to \cite[Corollary1.3]{Sta}, $V(\Oc(P)) = \{\rho(I) : I \in \Jc(P)\}$.  
Furthermore, \cite[Lemma 4]{HL} says that $\conv(\{\rho(I), \rho(I')\})$ with $I, I' \in \Jc(P)$ is an edge of $\Oc(P)$ if and only if $I \subset I'$ and $I' \setminus I$ is connected in $P$.  
In other words, the $1$-skeleton $\sk(\Oc(P))$ of $\Oc(P)$ is the finite simple graph on $V(\Oc(P))$ whose edges are those $\{\rho(I), \rho(I')\}$, where $I, I' \in \Jc(P)$, for which $I \subset I'$ and $I' \setminus I$ is connected in $P$.
This statement provides an explicit description of the edges of $\Oc(P)$.

\begin{Lemma}\label{clique}
A subset $C = \{\rho(I_0), \rho(I_1), \ldots, \rho(I_q)\} \subset V(\Oc(P))$ is a clique of $\sk(\Oc(P))$ if and only if $I_0 \subset I_1 \subset \cdots \subset I_q$ and each $I_j \setminus I_i$ with $i < j$ is connected in $P$. 
\end{Lemma}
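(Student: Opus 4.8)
The plan is to deduce the statement directly from the edge description of $\sk(\Oc(P))$ recalled just above, namely \cite[Lemma 4]{HL}: for $I, I' \in \Jc(P)$, the segment $\conv(\{\rho(I), \rho(I')\})$ is an edge of $\Oc(P)$ precisely when one of $I, I'$ is contained in the other and the corresponding set-theoretic difference is connected in $P$. Since a clique is by definition a complete subgraph of $\sk(\Oc(P))$, the set $C$ is a clique if and only if $\{\rho(I_i), \rho(I_j)\}$ is an edge of $\Oc(P)$ for every pair $i \neq j$, so the whole lemma is a translation of this ``every pair is an edge'' condition through the edge criterion.

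First I would treat the ``only if'' direction. Assume $C$ is a clique. Then for all $i \neq j$ the pair $\{\rho(I_i), \rho(I_j)\}$ is an edge, so by the edge criterion $I_i$ and $I_j$ are comparable with respect to inclusion. A family of sets that is pairwise comparable under inclusion is totally ordered by inclusion, so after relabeling the indices we may assume $I_0 \subsetneq I_1 \subsetneq \cdots \subsetneq I_q$, the inclusions being strict because the vertices $\rho(I_0),\dots,\rho(I_q)$ are distinct. Now fix $i < j$. Since $\{\rho(I_i), \rho(I_j)\}$ is an edge and $I_i \subset I_j$, the edge criterion forces $I_j \setminus I_i$ to be connected in $P$, which is exactly the asserted condition.

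Conversely, suppose $I_0 \subset I_1 \subset \cdots \subset I_q$ and $I_j \setminus I_i$ is connected in $P$ for all $i < j$. For any pair $i < j$ apply the edge criterion with $I = I_i \subseteq I_j = I'$: the hypothesis says precisely that $I_j \setminus I_i$ is connected, hence $\{\rho(I_i), \rho(I_j)\}$ is an edge of $\Oc(P)$. Thus every two vertices of $C$ are joined by an edge, i.e.\ $C$ induces a complete subgraph of $\sk(\Oc(P))$, so $C$ is a clique.

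I do not expect any genuine obstacle here; the argument is a direct unwinding of \cite[Lemma 4]{HL}. The only points deserving a word of care are the implicit reindexing in the statement (a clique need not arrive with its poset ideals already displayed as a chain), and the observation that connectedness of the differences $I_{i+1}\setminus I_i$ of consecutive members does \emph{not} by itself force connectedness of $I_j\setminus I_i$ for $j>i+1$, which is why the connectivity condition must be imposed on every pair $i<j$ rather than only on consecutive ones.
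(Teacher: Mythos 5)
Your proof is correct and follows essentially the same route as the paper: both reduce the clique condition to the pairwise edge criterion of \cite[Lemma 4]{HL}, observe that pairwise comparability yields a chain after relabeling, and read off connectedness of each difference $I_j \setminus I_i$. Your write-up is simply a more explicit unwinding of the paper's one-line argument, including the worthwhile remark that connectedness must be required for all pairs $i<j$, not just consecutive ones.
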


\begin{proof}
A subset $C = \{\rho(I_0), \rho(I_1), \ldots, \rho(I_q)\} \subset V(\Oc(P))$ is a clique of $\sk(\Oc(P))$ if and only if each $\conv(\{\rho(I_i), \rho(I_j)\})$ with $i \neq j$ is an edge of $\Oc(P)$.  In other words, one has $I_i \subset I_j$ and $I_j \setminus I_i$ is connected in $P$.  Thus, rearranging the subscripts, if necessary, will produce the desired result.  
\end{proof}

The description of the facets of $\Oc(P)$ is obtained as follows.

\begin{Lemma}[\cite{Sta}]\label{order facet}
The facets of $\Oc(P)$ are given by the hyperplane whose defining equation is as follows:
\begin{itemize}
\item 
$x_{i}=0$, where $p_{i} \in P$ is maximal;
\item
$x_{j}=1$, where $p_{j} \in P$ is minimal;
\item
$x_{i}=x_{j}$, where $p_{j}$ covers $p_{i}$ in $P$.
\end{itemize}
\end{Lemma}

We use these lemmas to prove the following key proposition of this section.

\begin{Proposition}
\label{keyORDER}
Suppose that a subset $C \subset V(\Oc(P))$ is a clique of $\sk(\Oc(P))$.  Then $\conv(C)$ is a face of $\Oc(P)$. 
\end{Proposition}

\begin{proof}
Let $C = \{\rho(I_0), \rho(I_1), \ldots, \rho(I_q)\} \subset V(\Oc(P))$ be a clique of $\sk(\Oc(P))$ and suppose that $I_0 \subset I_1 \subset \cdots \subset I_q$ and each $I_j \setminus I_i$ with $i < j$ is connected in $P$, by Lemma \ref{clique}.  
Let $1 \leq i_0 \leq q$.  
Since $I_{i_0} \setminus I_{i_0-1}$ is connected, it follows from Lemma \ref{order facet} that 
\[
\Fc_{i_0} = \Oc(P) \cap \left(\,\bigcap_{p_i, \, p_j \in I_{i_0} \setminus I_{i_0-1}} \,  \Hc_{i,j}\right)
\]   
is a face of $\Oc(P)$, where $\Hc_{i,j}$ is the hyperplane 
\[
\Hc_{i,j} = \{(x_1, \ldots, x_d) \in \RR^d : x_i = x_j\}
\]
of $\RR^d$.  Furthermore,
\[
\Fc_{0} = \Oc(P) \cap \left(\,\bigcap_{p_i \in I_{0}} \Gc_{i}\right), \, \, \, \, \, 
\Fc_{q+1} = \Oc(P) \cap \left(\,\bigcap_{p_i \not\in I_{q}} \Gc'_{i}\right)
\]
are faces of $\Oc(P)$, where $\Gc_{i}$ and $\Gc'_{i}$ are the hyperplanes 
\[
\Gc_{i} = \{(x_1, \ldots, x_d) \in \RR^d : x_i = 1\}, \, \, \, \, \, 
\Gc'_{i} = \{(x_1, \ldots, x_d) \in \RR^d : x_i = 0\}
\]
of $\RR^d$.  It then follows that 
\[
\Fc = \bigcap_{0 \leq i \leq q+1} \Fc_{i}
\]
is a face of $\Oc(P)$.  
We claim $\Fc = \conv(C)$.  

Since $\Fc$ is a face of $\Oc(P)$, one has $\Fc = \conv(\Fc \cap V(\Oc(P))$.  
Clearly, each $\rho(I_i)$ belongs to $\Fc$.  
In order to show $\Fc = \conv(C)$, one must prove that $\rho(I) \not\in \Fc$ if $I \in \Jc(P)$ and if $I \neq I_i$ for each $i = 0,1,\ldots,q$.  
If $I \not\subset I_q$ and if $p_a \in I \setminus I_q$, then $\rho(I) \not\in \Gc'_a$ and $\rho(I) \not\in \Fc_{q+1}$.  
If $I_0 \not\subset I$ and if $p_a \in I_0 \setminus I$, then $\rho(I) \not\in \Gc_a$ and $\rho(I) \not\in \Fc_{0}$.  
Let $I_0 \subset I \subset I_q$ and suppose that $\rho(I) \in \Fc$.  
It follows that, for each $1 \leq i \leq q$, one has either $I_i \setminus I_{i-1} \subset I$ or $(I_i \setminus I_{i-1}) \cap I = \emptyset$.  
Let $1 \leq i_0 < q$ denote the biggest integer for which $I_{i_0} \subset I$.  
Then $(I_{i_0+1} \setminus I_{i_0}) \cap I = \emptyset$.
Since  
\[
I_q = I_0 \cup (I_1 \setminus I_{0}) \cup \cdots \cup (I_{q} \setminus I_{q-1}), 
\]  
one can choose the smallest integer $j_0 > i_0$ for which $(I_{j_0+1} \setminus I_{j_0}) \cap I \neq \emptyset$.  
Then $I_{j_0+1} \setminus I_{j_0} \subset I$ and $(I_{j_0} \setminus I_{j_0-1}) \cap I = \emptyset$. 
Since $I_{j_0+1} \setminus I_{j_0-1}$ is connected, there is $p_a \in I_{j_0+1} \setminus I_{j_0}$ and $p_b \in I_{j_0} \setminus I_{j_0-1}$ for which $\{p_a, p_b\}$ is an edge of the Hasse diagram of $P$.
Since $p_a \in I$ and $p_b \not\in I$, one has $p_a < p_b$.  
However, since $I_{j_0}$ is a poset ideal of $P$ and $p_a \not\in I_{j_0}$, one has $p_b < p_a$, a contradiction.  
Thus $\rho(I) \not\in \Fc$, as desired.  
\end{proof}

In general, $\conv(C)$ is a $2$-neighborly polytope and not necessarily a simplex. However, according to Proposition \ref{keyORDER} and the following lemma, it immediately turns out to be a simplex. We continue with the main theorem of this paper.

\begin{Lemma}\cite[p.123]{Gru}\label{neighborly}
If $\Pc$ is a $k$-neighborly $d$-polytope and $k > \lfloor \frac{1}{2}d \rfloor$ then $\Pc$ is a $d$-simplex.    
\end{Lemma}

\begin{Theorem}\label{main1}
Suppose that a subset $C \subset V(\Oc(P))$ is a $k$-clique of $\sk(\Oc(P))$.  
Then $\conv(C)$ is a $(k-1)$-simplex face of $\Oc(P)$. 
\end{Theorem}

\begin{proof}
From Proposition \ref{keyORDER}, $\conv(C)$ is a face of $\Oc(P)$.
If $C' \subset C$ is a $(k - \ell)$-clique of $\sk(\Oc(P))$, where $1 \leq \ell \leq k-1$, then $\conv(C')$ is similarly a face of $\Oc(P)$.
Since $\conv(C)$, $\conv(C')$ are faces of $\Oc(P)$ and $\conv(C') \subset \conv(C)$, then $\conv(C')$ is a face of $\conv(C)$.
Namely, every subset of at most $(k-1)$ vertices of $\conv(C)$ defines a face of $\conv(C)$.
Thus, $\conv(C)$ is a $(k-1)$-neighborly polytope.
Furthermore, since $\dim(\conv(C)) \leq k-1$, and if $\dim(\conv(C)) < k-1$, then $\conv(C)$ cannot be $(k-1)$-neighborly.
Consequently, $\dim(\conv(C)) = k-1$. 
It follows from Lemma \ref{neighborly} that $\conv(C)$ is a $(k-1)$-simplex.
\end{proof}

\section{Chain polytopes and their simplex faces}
Let $P = \{p_1,\ldots,p_d\}$ be a finite partially ordered set.
An {\it antichain} of $P$ is a subset $W \subset P$ such that $x_i$ and $x_j$ belonging to $W$ with $i \neq j$ are incomparable.
In particular, the empty set $\emptyset$ as well as each $\{p_i\}$ is an antichain of $P$.
The {\it chain polytope} \cite{Sta} of $P$ is the convex hull $\Cc(P) = \conv (\{\rho(W) : W \mbox{ is an antichain of }P\})$ in $\RR^d$.
Note that, the definition of $\rho(W)$ is the same as in the previous section.
It is known that $\dim(\Cc(P)) = d$, and according to \cite[Theorem 2,2]{Sta}, $V(\Cc(P)) = \{\rho(W) : W \mbox{ is an antichain of }P\}$. 
Furthermore, \cite[Lemma 5]{HL} says that $\conv(\{\rho(W), \rho(W')\})$, where $W$, $W'$ are antichains of $P$ is an edge of $\Cc(P)$ if and only if $W \triangle W'$ is connected in $P$. 
Note that, $W \triangle W' = (W \setminus W') \cup (W' \setminus W)$.  
In other words, the $1$-skeleton $\sk(\Cc(P))$ of $\Cc(P)$ is the finite simple graph on $V(\Cc(P))$ whose edges are those $\{\rho(W), \rho(W')\}$, where $W, W'$ are antichains of $P$, for which $W \triangle W'$ is connected in $P$.
From this explicit description of the edges of $\Cc(P)$, the following lemma is clear.

\begin{Lemma}\label{chainCLIQUE}
A subset $C = \{\rho(W_0), \rho(W_1), \ldots, \rho(W_q)\} \subset V(\mathcal{C}(P))$ is a clique of $\sk(\mathcal{C}(P))$ if and only if each $W_i\bigtriangleup W_j$ with $i \neq j$ is connected in $P$. 
\end{Lemma}

The description of the facets of $\Cc(P)$ is obtained as follows.

\begin{Lemma}[\cite{Sta}]\label{chain facet}
The facets of $\Cc(P)$ are given by the hyperplane whose defining equation is as follows:
\begin{itemize}
\item 
$x_{i}=0$, for all $p_{i}\in P$;
\item
$\sum_{\ell = 1}^k x_{i_{\ell}} = 1$, where $p_{i_{1}}<\cdots<p_{i_{k}}$
 is a maximal chain of $P$.
\end{itemize}
\end{Lemma}

In the proof of \cite[Theorem 2.1]{HLSS}, the following claim has been obtained.

\begin{Lemma}\label{antichain}
Let $A$ and $B$ be antichains of $P$ such that $A \neq B$, and $A \triangle B$ is connected in $P$.
Then, $A$ and $B$ satisfy either (i) $B \subset A$ or (ii) $b < a$ whenever $a \in A$ and $b \in B$ are comparable.
\end{Lemma}

We use these lemmas to prove the following key proposition of this section.

\begin{Proposition}
\label{keyCHAIN}
Suppose that a subset $C \subset V(\mathcal{C}(P))$ is a clique of $\sk(\mathcal{C}(P))$.
Then $\conv(C)$ is a face of $\mathcal{C}(P)$.
\end{Proposition}

\begin{proof}
Let $C = \{\rho(W_0), \rho(W_1), \ldots, \rho(W_q)\} \subset V(\mathcal{C}(P))$ be a clique of $\sk(\mathcal{C}(P))$, where each $W_i \subset P$ is an antichain of $P$.
Lemma \ref{chainCLIQUE} says that each $W_i\bigtriangleup W_j$ with $i \neq j$ is connected in $P$.  
Let $\Mc(C)$ denote the set of all maximal chains $\Cc$ of $P$ with $\Cc \cap W_i \neq \emptyset$ for each $i$.
Lemma \ref{chain facet}
%The description \cite{Sta} of the facets of %$\mathcal{C}(P)$ 
guarantees that
\[
\Fc = \mathcal{C}(P) \cap \left(\bigcap_{\Cc \in \Mc(C)} \Hc_\Cc \right)
\]
is a face of $\mathcal{C}(P)$, where $\Hc_\Cc$ is the hyperplane
\[
\Hc_\Cc = \{(x_1, \ldots, x_d) \in \RR^d: \sum_{p_i \in \Cc} x_i = 1\}
\]
of $\RR^d$.  Furthermore,
\[
\Fc' = \mathcal{C}(P) \cap \left(\,\bigcap_{p_i \, \not\in \,W_0 \cup W_1 \cup \cdots \cup W_q} \Hc'_i \right)
\]
is a face of $\mathcal{C}(P)$, where $\Hc'_i$ is the hyperplane 
\[
\Hc'_i = \{(x_1, \ldots, x_d) \in \RR^d : x_i = 0 \}
\]
of $\RR^d$. 
Let $\Gc = \Fc \cap \Fc'$, which is a face of $\mathcal{C}(P)$.
We claim $\Gc = \conv(C)$.

Since $\Gc$ is a face of $\Cc(P)$, one has $\Gc = \conv(\Gc \cap V(\Cc(P)))$.
Clearly, each $\rho(W_i)$ belongs to $\Gc$.
Let $A$ be an antichain of $P$, where $A \neq W_i$.
If $A \not\subset W_0 \cup \cdots \cup W_q$, then $\rho(A) \not\in \mathcal{F}'$. 
Hence, we assume $A \subset W_0 \cup \cdots \cup W_q$.
Since each $W_i\bigtriangleup W_j$ with $i \neq j$ is connected, $W_i \not\subset A$.
Furthermore, if there exists an $i$ such that $A \subset W_i$, then $\rho(A) \notin \Fc$, so we set $A \not\subset W_i$.
From these claim, for each $W_i$, it follows that either $A \cap W_i = \emptyset$ or both $A \setminus W_i \neq \emptyset$ and $W_i \setminus A \neq \emptyset$ hold.
For simplicity, among all $W_i$, we define $W_0, W_1, \ldots, W_s$ to be those that satisfy
\[W_i \cap A \neq \emptyset,\;\;\;\mbox{and}\;\;\; W_i \not\subset \bigcup_{\substack{ 0 \leq j \leq q\\i \neq j}} W_j,\]
where $s \leq q$.
Suppose that for $W_0, W_1, \ldots, W_s$, every pair of elements $\{p_i, p_j\}$ with $i \neq j$ from $p_0 \in W_0, p_1 \in W_1, \ldots, p_s \in W_s$ is comparable. Then, Lemma \ref{antichain} allows us to assume that $p_0 < p_1 < \cdots < p_s$ holds.
We define $A_i = (A \cap W_i) \setminus \bigcup_{j=0}^{i-1} (A \cap W_j)$ for $0 \leq i \leq s$. Then, we have $A = \bigcup_{i=0}^s A_i$ and $A_i \cap A_j = \emptyset$ for $i \neq j$.
Furthermore, when $A = \{p_{i_0},p_{i_1},\ldots,p_{i_m}\}$, we define $\gamma(A)$ as follows:
\[\gamma(A) = \sum_{k=0}^m \gamma(p_{i_k}),\quad \mbox{where}\quad\gamma(p_{i_k})=|\{\Cc \in \Mc(C) : p_{i_k} \in \Cc\}|.\]
In subsequent parts of this section, we will show that $\gamma(A) < |\Mc(C)|$.
For the sake of convenience, we set $A_0 = A''_0$.
We define $A''_{\ell + 1}$ in sequence as follows for $\ell = 0,1,\ldots, s-1$:
\[
A''_{\ell + 1} = \left\{
\begin{array}{ll}
\{p_{i_k} \in W_{\ell + 1} : p_{i_k} > p_{i_j},  p_{i_j} \in A''_{\ell} \setminus W_s \} \cup A_{\ell + 1} &  \mbox{if}\;\; A''_{\ell} \not\subset W_s,\\
A_{\ell + 1} & \mbox{if}\;\; A''_{\ell} \subset W_s.
\end{array}
\right.
\]
We note the following:
\begin{itemize}
\item
When $A''_{\ell} \not\subset W_s$, we set $A'_{\ell + 1} = \{p_{i_k} \in W_{\ell + 1} : p_{i_k} > p_{i_j}, p_{i_j} \in A''_{\ell} \setminus W_s \}$. Then, $A'_{\ell + 1} \cap A_{\ell + 1} = \emptyset$ and $A'_{\ell + 1} \cup A_{\ell + 1} \subset W_{\ell + 1}$ holds. Since $W_{\ell} \triangle W_{\ell + 1}$ is connected, it follows that $\gamma(A''_{\ell} \setminus W_s) < \gamma(A'_{\ell + 1})$. Therefore,
\[\gamma(A''_{\ell}) = \gamma(A''_{\ell} \cap W_s) + \gamma(A''_{\ell} \setminus W_s) < \gamma(A''_{\ell} \cap W_s) + \gamma(A'_{\ell + 1})\]
holds;
\item
When $A''_{\ell} \subset W_s$, we have  
\[\gamma(A''_{\ell}) = \gamma(A''_{\ell} \cap W_s).\]
In particular, if $\ell = s-1$, then $A''_s \subset W_s$.
Therefore, we can denote $\gamma(A''_s) = \gamma(A''_s \cap W_s)$;
\item
We have 
\[\bigcup_{\ell = 0}^s (A''_{\ell} \cap W_s) \subset W_s \quad\mbox{and}\quad (A''_i \cap W_s) \cap (A''_j \cap W_s) = \emptyset \quad\mbox{for}\quad i \neq j.\]
\end{itemize}
By using these sequentially, we establish the following: 
\[\gamma(A) = \sum_{\ell = 0}^{s} \gamma(A_{\ell}) < \sum_{\ell = 0}^{s} \gamma(A''_{\ell} \cap W_s) \leq  \gamma(W_s) = |\Mc(C)|.\]
Consequently, we have $\gamma(A) < |\Mc(C)|$.
This leads us to the conclusion that $\rho(A) \notin \Fc$, as desired.
\end{proof}

We can proceed with the same argument as in the previous section. 
Namely, it follows from Proposition \ref{keyCHAIN} and Lemma \ref{neighborly} that $\conv(C)$ is a simplex, thus establishing the other main theorem of this paper.
The proof is exactly the same as in Theorem \ref{main1}.

\begin{Theorem}
Suppose that a subset $C \subset V(\mathcal{C}(P))$ is a $k$-clique of $\sk(\mathcal{C}(P))$.
Then $\conv(C)$ is a $(k-1)$-simplex face of $\mathcal{C}(P)$. 
\end{Theorem}

\section*{Acknowledgement}
The author extends sincere gratitude to Takayuki Hibi for providing many useful discussions and ideas, and for the opportunity to write this paper.

\end{document}